\newcommand{\frc}{\mathfrak c}
\newcommand{\bfB}{{\mathbf B}}
\newcommand{\bbN}{{\mathbb N}}
\newcommand{\bbC}{\mathbb C}
\newtheorem{thm}{Theorem}
\newtheorem{lemma}[thm]{Lemma}
\newtheorem{prop}[thm]{Proposition}
\theoremstyle{definition}
\newcounter{my_enumerate_counter}
\newcommand{\pushcounter}{\setcounter{my_enumerate_counter}{\value{enumi}}}
\newcommand{\popcounter}{\setcounter{enumi}{\value{my_enumerate_counter}}}
\newcommand{\FileName}[1]{\thanks{Filename: {\tt #1}}}
\title{Orthonormal bases of Hilbert spaces}
\author{Ilijas Farah}
\address{Department of Mathematics and Statistics\\
York University\\
4700 Keele Street\\
North York, Ontario\\ Canada, M3J
1P3\\
and Matematicki Institut, Kneza Mihaila 34, Belgrade, Serbia}
\urladdr{http://www.math.yorku.ca/$\sim$ifarah}
\email{ifarah@mathstat.yorku.ca}
\thanks{Partially supported by NSERC}
\date{\today}
\begin{document}
\maketitle

Assume $H$ is a Hilbert space and $K$ is a dense linear (not
necessarily closed) subspace. The question whether $K$ necessarily
contains an orthonormal basis for $H$ even when $H$ is nonseparable
was mentioned by Bruce Blackadar in an informal conversation during
the Canadian Mathematical Society meeting in Ottawa in December 2008
and this note provides a negative answer. Note that the Gram--Schmidt
process gives a positive answer when $H$ is separable.

 I will use $\aleph_1$ to denote both the
first uncountable ordinal and the first uncountable cardinal and I
will use $\frc=2^{\aleph_0}$ to denote both the cardinality of the
continuum and the least ordinal of this cardinality. All bases are
orthonormal.

For cardinals $\lambda<\theta$ 
consider   $\ell^2(\lambda)$ as a
subspace of $\ell^2(\theta)$ consisting of vectors supported on the first
$\lambda$ coordinates. Let $p_\lambda$ denote the projection of $\ell^2(\theta)$
to $\ell^2(\lambda)$.

\begin{lemma} \label{L1} Assume $\lambda<\theta$ are infinite cardinals 
such that $\theta$ is regular and $x_\gamma$, for
$\gamma<\theta$, is an orthonormal family in $\ell^2(\theta)$. Then there is
$\gamma_0<\theta$ such that $x_\gamma$ is orthogonal to $\ell^2(\lambda)$ for
all $\gamma\geq \gamma_0$.
\end{lemma}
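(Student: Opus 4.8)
The plan is to bound, for each fixed coordinate $\xi<\lambda$, the number of vectors $x_\gamma$ having a nonzero $\xi$-th component, and then amalgamate over all $\xi<\lambda$ using the regularity of $\theta$.

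First fix $\xi<\lambda$ and let $e_\xi\in\ell^2(\theta)$ be the corresponding standard basis vector. Since $\langle x_\gamma,e_\xi\rangle$ equals the $\xi$-th coordinate of $x_\gamma$, Bessel's inequality applied to the orthonormal family $(x_\gamma)_{\gamma<\theta}$ yields $\sum_{\gamma<\theta}|\langle x_\gamma,e_\xi\rangle|^2\le\|e_\xi\|^2=1$. Hence the set $S_\xi:=\{\gamma<\theta: x_\gamma(\xi)\ne 0\}$ is countable: for each positive integer $n$ the set $\{\gamma:|x_\gamma(\xi)|^2>1/n\}$ has fewer than $n$ elements, and $S_\xi$ is the union of these over $n$.

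Now put $S:=\bigcup_{\xi<\lambda}S_\xi$. By construction $\gamma\in S$ if and only if $p_\lambda x_\gamma\ne 0$, and $|S|\le\lambda\cdot\aleph_0=\lambda<\theta$ since $\lambda$ is infinite. This is exactly where regularity of $\theta$ is used: a subset of $\theta$ of cardinality strictly less than $\theta$ cannot be cofinal in $\theta$, so $S$ is bounded. Choose any $\gamma_0<\theta$ with $\gamma_0>\sup S$ (for instance $\gamma_0=\sup S+1$; this also covers the case where the supremum is attained). Then for every $\gamma\ge\gamma_0$ we have $\gamma\notin S$, i.e.\ $p_\lambda x_\gamma=0$, which says precisely that $x_\gamma$ is orthogonal to $\ell^2(\lambda)$.

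I do not anticipate a real obstacle here. The only points needing care are the invocation of regularity (genuinely necessary, since for singular $\theta$ a subset of size $\lambda<\theta$ can be cofinal) and the harmless bookkeeping in choosing $\gamma_0$ strictly above $\sup S$ so that the bound also disposes of the vector indexed by the supremum itself.
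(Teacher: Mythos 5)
Your proof is correct and follows essentially the same strategy as the paper: for each coordinate $\xi<\lambda$ only a small set of indices $\gamma$ is relevant, and regularity of $\theta$ bounds the union of these sets over $\xi<\lambda$. The only difference is cosmetic --- the paper phrases the per-coordinate step as ``the projection of $e_\xi$ onto the closed span of the family already lies in the span of an initial segment $X(\alpha(\xi))$,'' whereas you make the underlying countability of each $S_\xi$ explicit via Bessel's inequality, which if anything spells out a step the paper leaves implicit.
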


\begin{proof}For $\alpha\leq\theta$ let $X(\alpha)$ denote the closed linear span of $x_\gamma$ for $\gamma<\alpha$.
Let $e_\xi$, for $\xi<\lambda $, be the standard basis for $\ell^2(\lambda)$. Let
$\alpha(\xi)<\kappa$ be the minimal ordinal such that the
projection of $e_\xi$ to $X(\theta)$ is in $X(\alpha(\xi))$. Since $\theta>\lambda$ we have 
$\alpha(\xi)<\theta$ and by the regularity of $\theta$ we  have that  
$\gamma_0=\sup_{\xi<\lambda} \alpha(\xi)<\theta$ is as required.
\end{proof}

\begin{lemma}\label{L3} Assume $\lambda<\theta$ are infinite cardinals 
such that $\theta$ is regular and $\lambda^{\aleph_0}\geq \theta$. 
Then there is a dense
linear subspace $K$ of $\ell^2(\theta)$ such that the kernel of the
restriction of $p_\lambda$ to $K$ is $\{0\}$. Such $K$ does not contain an
 orthonormal family of size greater than $\lambda$.
\end{lemma}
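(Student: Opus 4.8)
The plan is to obtain $K$ by perturbing the standard orthonormal basis of $\ell^2(\theta)$: keep the vectors $e_\xi$ for $\xi<\lambda$ as they are, and for each $\eta$ with $\lambda\le\eta<\theta$ replace $e_\eta$ by a vector $f_\eta:=e_\eta+v_\eta$ with $v_\eta\in\ell^2(\lambda)$ chosen below, then let $K$ be the algebraic span of $\{e_\xi:\xi<\lambda\}\cup\{f_\eta:\lambda\le\eta<\theta\}$. The point of this shape is that $p_\lambda f_\eta=v_\eta$, so $p_\lambda$ ``sees'' each $f_\eta$, while $e_\eta=f_\eta-v_\eta$ lies in the \emph{closure} of $K$, which will give density.

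The one thing that has to be arranged with care is the choice of the $v_\eta$. Writing $c_{00}(\lambda)$ for the algebraic span of $\{e_\xi:\xi<\lambda\}$ inside $\ell^2(\lambda)$ (the finitely supported vectors), I want the $v_\eta$ to be \emph{linearly independent modulo $c_{00}(\lambda)$}: no nontrivial finite combination $\sum_{\eta\in F}b_\eta v_\eta$ should lie in $c_{00}(\lambda)$. Such a family of size $\theta$ exists precisely because of the hypothesis: the Hamel dimension of $\ell^2(\lambda)$ equals its cardinality $\lambda^{\aleph_0}\ge\theta$, which strictly exceeds $\dim c_{00}(\lambda)=\lambda$, so the quotient vector space $\ell^2(\lambda)/c_{00}(\lambda)$ has dimension $\lambda^{\aleph_0}\ge\theta$, and I can take the $v_\eta$ to project to $\theta$ linearly independent vectors of that quotient.

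With the $v_\eta$ fixed, verifying the two properties of $K$ is bookkeeping. Density: the closed span of $K$ contains $\ell^2(\lambda)$ (it contains every $e_\xi$, $\xi<\lambda$), hence every $v_\eta$ and every $f_\eta$, hence every $e_\eta=f_\eta-v_\eta$ for $\eta\ge\lambda$, so it is all of $\ell^2(\theta)$. Injectivity of $p_\lambda\rs K$: a typical element of $K$ is $w=\sum_i a_ie_{\xi_i}+\sum_{\eta\in F}b_\eta f_\eta$ with $\xi_i<\lambda$ and $F\subseteq[\lambda,\theta)$ finite, and since $p_\lambda e_{\xi_i}=e_{\xi_i}$ and $p_\lambda f_\eta=v_\eta$, the equation $p_\lambda w=0$ reads $\sum_{\eta\in F}b_\eta v_\eta=-\sum_i a_ie_{\xi_i}\in c_{00}(\lambda)$; the choice of the $v_\eta$ forces all $b_\eta=0$, hence all $a_i=0$, hence $w=0$.

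Finally, for the ``moreover'' clause, suppose $K$ contained an orthonormal family of cardinality $>\lambda$; shrinking it, assume it is $\<x_\gamma:\gamma<\lambda^+\>$. The proof of Lemma~\ref{L1} goes through verbatim with the regular cardinal $\lambda^+$ in place of $\theta$ — it uses only that the index set is a regular cardinal exceeding $\lambda$ and that the orthogonal projection of each $e_\xi$, $\xi<\lambda$, onto the closed span of the $x_\gamma$ already lies in the closed span of countably many of them, not that this cardinal is the Hilbert-space dimension. So there is $\gamma_0<\lambda^+$ with $x_{\gamma_0}\perp\ell^2(\lambda)$, i.e.\ $p_\lambda x_{\gamma_0}=0$, contradicting injectivity of $p_\lambda\rs K$ since $\|x_{\gamma_0}\|=1$. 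I expect the only real obstacle to be pinning down the cardinal arithmetic behind $\dim\bigl(\ell^2(\lambda)/c_{00}(\lambda)\bigr)\ge\theta$ — that is, recognizing that $\lambda^{\aleph_0}\ge\theta$ is exactly the hypothesis that makes the $v_\eta$ available; the rest is routine.
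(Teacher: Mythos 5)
Your proof is correct, but your construction of $K$ takes a genuinely different route from the paper's. The paper starts from an arbitrary dense sequence $z_\gamma$, $\gamma<\theta$, and recursively perturbs it to $y_{\gamma,m}=z_\gamma+\frac1m t_m$, where the $t_m$ are Hamel basis vectors of $\ell^2(\lambda)$ not yet used at stage $\gamma$; density comes from $\|y_{\gamma,m}-z_\gamma\|\le 1/m$, and injectivity of $p_\lambda\rs K$ comes from outright linear independence of the vectors $p_\lambda(y_{\gamma,m})$. You instead keep the basis vectors $e_\xi$, $\xi<\lambda$, and perturb only the $e_\eta$, $\eta\ge\lambda$, by vectors $v_\eta\in\ell^2(\lambda)$ chosen linearly independent modulo $c_{00}(\lambda)$. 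Both constructions hinge on the same cardinality fact ($\dim\ell^2(\lambda)=\lambda^{\aleph_0}\ge\theta>\lambda$, which the paper states as $|\bfB|=\lambda^{\aleph_0}$), but yours buys two simplifications: no transfinite recursion is needed, since the $v_\eta$ can be chosen all at once from a basis of the quotient $\ell^2(\lambda)/c_{00}(\lambda)$; and the perturbations need not be small, since density is obtained algebraically from $e_\eta=f_\eta-v_\eta\in\overline{K}$ rather than by metric approximation. The mild trade-off is that your $K$ is tied to the coordinate structure of $\ell^2(\theta)$, whereas the paper perturbs an arbitrary prescribed dense set. Your treatment of the final clause --- rerunning the proof of Lemma~\ref{L1} with the regular cardinal $\lambda^+$ indexing a putative orthonormal family inside $\ell^2(\theta)$ --- is exactly the intended reading of the paper's bare appeal to Lemma~\ref{L1}, and you are right that it is the proof rather than the literal statement that applies there.
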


\begin{proof}
 Let $z_\gamma$, for $\gamma<\theta$, be
a dense subset of $\ell^2(\theta)$. We shall find $y_{\gamma,m}$, for
$\gamma<\theta$ and $m\in \bbN$, such that
$\|y_{\gamma,m}-z_\gamma\|\leq 1/m$ for all $\gamma$ and $m$ and
$p_\lambda(y_{\gamma,m})$, for $\gamma<\theta$ and $m\in \bbN$, are
linearly independent.

Fix a Hamel basis $\bfB$ for $\ell^2(\lambda)$ considered as a vector space over
$\bbC$. We have that $|\bfB|=\lambda^{\aleph_0}\geq \theta$.  
Assume $y_{\gamma,m}$ have been constructed for all
$\gamma<\alpha$ and all $m$. Let $F$ be the minimal subset of $\bfB$
such that $\{p_\lambda(z_\alpha)\}\cup \{y_{\gamma,m}: \gamma<\alpha, m\in
\bbN\}$ is included in the linear span of $F$. Then $|F|\leq
|\gamma|+\aleph_0<\theta\leq |\bfB|$. Fix distinct
vectors $t_m$, for $m\in \bbN$, in $\bfB\setminus F$ and let
$y_{\alpha,m}=z_\alpha+ \frac 1m  t_m$. (We are assuming  $t_m$ are
unit vectors, but this is not required from $y_{\alpha,m}$.) Then
$\|y_{\alpha,m}-z_\alpha\|=\frac 1m$ and and  $y_{\gamma,m}$, for
$\gamma\leq \alpha$ and $m\in \bbN$, are linearly independent.

This describes the recursive construction. The linear span  $K$ of
$\{y_{\gamma,m}:\gamma<\theta,m\in \bbN\}$ is dense and for $x\in K$
we have $p_\lambda(x)=0$ if and only if $x=0$.
Lemma~\ref{L1} implies that $K$ cannot contain an
orthonormal family of size greater than $\lambda$.
\end{proof}

\begin{prop} \label{P4} Every nonseparable Hilbert space $H$ contains a dense
subspace that contains no basis for $H$.
\end{prop}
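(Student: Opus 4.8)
The plan is to reduce to $H=\ell^2(\kappa)$ with $\kappa=\dim H\ge\aleph_1$ and to split into two cases according to whether $\lambda^{\aleph_0}\ge\kappa$ for some $\lambda<\kappa$.

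\textbf{Case 1: there is an infinite $\lambda<\kappa$ with $\lambda^{\aleph_0}\ge\kappa$} (in particular every $\kappa\le\frc$, via $\lambda=\aleph_0$). Here I would invoke Lemma~\ref{L3} to produce a dense $K\subseteq\ell^2(\kappa)$ with $\ker(p_\lambda\rs K)=\{0\}$; note that the construction in that proof never uses regularity of the larger cardinal, only the final clause does. It then suffices to rerun the argument of Lemma~\ref{L1}: if $x_\gamma$ ($\gamma<\kappa$) were orthonormal in $K$, then writing $e_\xi$ ($\xi<\lambda$) for the standard basis of $\ell^2(\lambda)$, the projection of each $e_\xi$ onto $\overline{\operatorname{span}}\{x_\gamma:\gamma<\kappa\}$ is a countable combination of the $x_\gamma$, so all but at most $\lambda$ of the $x_\gamma$ are orthogonal to $\ell^2(\lambda)$; since $\lambda<\kappa$, some such $x_\gamma\in K$ has $p_\lambda(x_\gamma)=0$, hence $x_\gamma=0$, which is absurd. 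Thus $K$ carries no orthonormal family of size $>\lambda$, and in particular no basis of $H$ (a basis has size $\kappa$).

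\textbf{Case 2: $\lambda^{\aleph_0}<\kappa$ for every $\lambda<\kappa$} (so $\kappa>\frc$). Assume first $\kappa$ is regular, so $\bigcup_{\alpha<\kappa}\ell^2(\alpha)=\ell^2(\kappa)$. The key is a reflection fact: for every orthonormal basis $\langle e_\beta:\beta<\kappa\rangle$ of $\ell^2(\kappa)$ the set $C=\{\alpha<\kappa:\overline{\operatorname{span}}\{e_\beta:\beta<\alpha\}=\ell^2(\alpha)\}$ is closed unbounded. Closure is immediate; for unboundedness one chooses $\alpha_0<\alpha_1<\cdots$ so that $\alpha_{n+1}$ is above the supports of all $e_\beta$ with $\beta<\alpha_n$ and so that every coordinate vector $\delta_\gamma$ ($\gamma<\alpha_n$), being a countable combination of the $e_\beta$, lies in $\overline{\operatorname{span}}\{e_\beta:\beta<\alpha_{n+1}\}$; then $\alpha=\sup_n\alpha_n\in C$, using $\operatorname{cf}\kappa>\omega$. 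Consequently it is enough to build a dense $K\subseteq\ell^2(\kappa)$ so that
\[
K\cap\ell^2(\alpha)\ \text{contains no basis of}\ \ell^2(\alpha)\qquad\text{for club-many}\ \alpha<\kappa,
\]
since then an orthonormal basis of $\ell^2(\kappa)$ lying in $K$ would, at an $\alpha$ in $C$ and in that club, have $\{e_\beta:\beta<\alpha\}\subseteq K\cap\ell^2(\alpha)$ a basis of $\ell^2(\alpha)$, a contradiction.

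To construct such a $K$ I would run a recursion of length $\kappa$ producing a continuous increasing chain of dense subspaces $K_\alpha\subseteq\ell^2(\alpha)$ that is \emph{coherent}, $K_{\alpha'}\cap\ell^2(\alpha)=K_\alpha$ for $\alpha<\alpha'$ (so $K:=\bigcup_\alpha K_\alpha$ is dense and $K\cap\ell^2(\alpha)=K_\alpha$), together with a continuous increasing chain of closed subspaces $W_\alpha\subseteq\ell^2(\alpha)$ with $\overline{\bigcup_\alpha W_\alpha}=\ell^2(\kappa)$ and $\dim W_\alpha<|\alpha|$ for club-many $\alpha$, maintained subject to $K_\alpha\cap(\ell^2(\alpha)\ominus W_\alpha)=\{0\}$. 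For those $\alpha$ with $\dim W_\alpha<|\alpha|$, Lemma~\ref{L1} inside $\ell^2(\alpha)$ shows $K_\alpha$ has no orthonormal family of size $>\dim W_\alpha$, hence no basis of $\ell^2(\alpha)$. The local step at stage $\alpha$ is exactly the construction of Lemma~\ref{L3} relative to $W_\alpha$ inside $\ell^2(\alpha)$ (or, when $|\alpha|$ is itself inaccessible by $\aleph_0$-th powers, the present construction applied recursively inside $\ell^2(\alpha)$), arranging each new generator $y_{\gamma,m}=z_\gamma+\tfrac1m t_{\gamma,m}$ with the new direction $t_{\gamma,m}$ avoiding every $\ell^2(\beta)\ominus W_\beta$ with $\beta\le\gamma$. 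The chain $W_\alpha$ is forced to drift upward: a dense subspace of $\ell^2(\kappa)$ has linear dimension $\ge\kappa>(\dim W)^{\aleph_0}$ for any closed $W$ with $\dim W<\kappa$, so no single such $W$ meets $K$ trivially — it is the reflection fact that lets the separate levels do the job that no global subspace can.

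The hard part is pushing this recursion through. The genuinely delicate point is the limit stages: enlarging a subspace can only create, never destroy, orthonormal bases, so one must ensure that $K_\alpha=\bigcup_{\beta<\alpha}K_\beta$ has not \emph{already} acquired a basis of $\ell^2(\alpha)$; this is managed by carrying the full invariant ``$K_\alpha\cap(\ell^2(\beta)\ominus W_\beta)=\{0\}$ for all $\beta<\alpha$'' (and the compatibility/denseness of the $K_\beta$) along the construction, together with the choice of the approximating dense sequence $\{z_\gamma\}$ so that its initial segments are dense in the initial coordinate subspaces. Finally, a singular $\kappa$ of uncountable cofinality is handled by the same reflection/recursion argument along clubs of $\kappa$ (and $\ell^2(\alpha)$ for $\alpha<\kappa$ has smaller dimension, so the recursion applies), while $\operatorname{cf}\kappa=\omega$, when not already subsumed by Case~1, requires a variant run along a fixed cofinal $\omega$-sequence in $\kappa$.
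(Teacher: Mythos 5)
Your Case 1 is correct and matches the paper's treatment of $\theta\le 2^{\aleph_0}$ (your observation that the Bessel-type count of $\{\gamma:p_\lambda(x_\gamma)\neq0\}$ has size at most $\lambda$ even sidesteps the regularity hypothesis of Lemma~\ref{L1} cleanly). The problem is Case 2, which is where all the remaining content of the proposition lives, and there you have a plan rather than a proof. The club/reflection reduction is plausible, but the recursion you describe is not carried out, and the obstacles you yourself flag are real: (i) the local step at stage $\alpha$ wants Lemma~\ref{L3} relative to $W_\alpha$ inside $\ell^2(\alpha)$, which needs $(\dim W_\alpha)^{\aleph_0}\ge|\alpha|$ --- but Case 2 is precisely the situation where $\lambda^{\aleph_0}<\kappa$ for all $\lambda<\kappa$, so for many $\alpha$ this fails and you fall back on ``the present construction applied recursively inside $\ell^2(\alpha)$'', a recursive call that is never grounded; (ii) the limit stages, where you must certify that $\bigcup_{\beta<\alpha}K_\beta$ has not already acquired a basis of $\ell^2(\alpha)$ while staying dense and coherent, are exactly the hard part and are left to an invariant whose preservability is asserted, not checked; (iii) the case $\operatorname{cf}\kappa=\omega$ (e.g.\ $\kappa=\aleph_\omega$ under GCH, which does land in your Case 2) is dismissed with ``a variant'' that is not supplied and for which the club machinery is unavailable. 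As it stands, Case 2 is a gap.

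The paper avoids all of this with a two-line trick you are missing: for $\theta>2^{\aleph_0}$ one does \emph{not} need $K$ to obstruct bases ``locally everywhere''; a single defective direction suffices. Write $H=\ell^2(\frc)\oplus\ell^2(\theta)$, let $H_0\subseteq\ell^2(\frc)$ be separable and $K\subseteq\ell^2(\frc)$ dense with $\ker(p_0)\cap K=\{0\}$ as in Lemma~\ref{L3} (this is just your Case 1 applied to $\ell^2(\frc)$), and set $K_1=K\oplus\ell^2(\theta)$, which is dense in $H$. If $\eta_\gamma$ were a basis of $H$ inside $K_1$, then only countably many $\gamma$ have $q_0(\eta_\gamma)\neq0$ (Bessel against a separable $H_0$), while uncountably many have $q_{\frc}(\eta_\gamma)\neq0$ (since these projections must span $\ell^2(\frc)$); any $\gamma$ in the difference gives $0\neq q_{\frc}(\eta_\gamma)\in K$ with $p_0(q_{\frc}(\eta_\gamma))=q_0(\eta_\gamma)=0$, contradicting the choice of $K$. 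No transfinite recursion of length $\theta$, no clubs, and no case split on the cofinality or arithmetic of $\theta$ is needed.
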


\begin{proof}
We may assume $H=\ell^2(\theta)$ for some uncountable cardinal
$\theta$. In the case when $\theta\leq 2^{\aleph_0}$
 the existence of  $K$ is guaranteed by the case $\lambda=\aleph_0$ of 
 Lemma~\ref{L3}.

 We may therefore assume $\theta>2^{\aleph_0}$
and write  $H=\ell^2(\frc)\oplus \ell^2(\theta)$. Let $H_0$ be a separable subspace of
$\ell^2(\frc)$ and let $K$ be a dense subspace of $\ell^2(\frc)$ as in
Lemma~\ref{L3}, so that the projection $p_0$ of $\ell^2(\frc)$ to $H_0$
satisfies $\ker(p_0)\cap K=\{0\}$.

The dense subspace  $K_1=K\oplus \ell^2(\theta)$ of $H$ contains no
basis for $H$. Assume the contrary and let $\eta_\gamma$, for
$\gamma<\theta$, be such a basis. Write $q_0$ for the projection of
$H$ to $H_0$ and $q_{\frc}$ for the projection of $H$ to $\ell^2(\frc)$.
By Lemma~\ref{L1} the set $X=\{\gamma: q_{0}(\eta_\gamma)\neq 0\}$ is
countable. On the other hand, since the vectors
$\{q_{\frc}(\eta_\gamma): \gamma<\theta\}$ span $\ell^2(\frc)$ the set
$\{\gamma<\theta: q_{\frc}(\eta_\gamma)\neq 0\}$ is uncountable.
Therefore for some $\gamma$ we have $q_0(\eta_\gamma)=0$ and
$q_{{\frc}}(\eta_\gamma)\neq 0$. Since $p_0
(q_{{\frc}}(\eta_\gamma))=q_0(\eta_\gamma)$ this contradicts the
choice of $K$.
\end{proof}

I shall end by providing an explanation why the subspace $K$ of
$\ell^2(\theta)$ constructed in the proof of Proposition~\ref{P4} has
a much stronger property when $\theta\leq 2^{\aleph_0}$ than when, for
example, $\theta=(2^{\aleph_0})^+$.

\begin{prop} Assume $\theta$ is a regular cardinal. The following are equivalent.
\begin{enumerate}
\item  \label{P3.1} For all cardinals $\lambda<\theta$ we have
$\lambda^{\aleph_0}<\theta$.

\item \label{P3.2} If  $Y$ is a linear subspace of some Hilbert space such that
$|Y|=\theta$ then $Y$ contains an orthonormal family of size
$\theta$. \end{enumerate}
\end{prop}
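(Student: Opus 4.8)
The plan is to prove the contrapositive in both directions, exploiting the dichotomy between ``small'' linear subspaces (which may fail to contain large orthonormal families, as in Lemma~\ref{L3}) and the Gram--Schmidt process.

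For the direction \eqref{P3.1} $\Rightarrow$ \eqref{P3.2}, suppose $Y\subseteq H$ is a linear subspace with $|Y|=\theta$, and try to build an orthonormal family $x_\xi\in Y$ by transfinite recursion on $\xi<\theta$. At stage $\xi$ we have $\{x_\eta:\eta<\xi\}$ orthonormal, and we wish to find $y\in Y$ not in the closed span $V_\xi$ of $\{x_\eta:\eta<\xi\}$; if we can, apply Gram--Schmidt to $y$ and its projection onto $V_\xi$ to obtain $x_\xi$. The obstruction is that $Y$ might already lie inside $V_\xi$, i.e. $Y\subseteq V_\xi$. But $V_\xi$ is the closed span of a family of size $|\xi|<\theta$, so it is a closed subspace of Hilbert-space density character at most $|\xi|+\aleph_0 < \theta$, hence $|V_\xi|\le (|\xi|+\aleph_0)^{\aleph_0}$, which is $<\theta$ by hypothesis \eqref{P3.1} (taking $\lambda=|\xi|+\aleph_0<\theta$). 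Since $|Y|=\theta>|V_\xi|$, we have $Y\not\subseteq V_\xi$ and the recursion continues through all $\xi<\theta$, producing the desired orthonormal family of size $\theta$.

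For \eqref{P3.2} $\Rightarrow$ \eqref{P3.1}, argue by contraposition: assume \eqref{P3.1} fails, so there is a cardinal $\lambda<\theta$ with $\lambda^{\aleph_0}\ge\theta$. Then Lemma~\ref{L3} (applied with this $\lambda$ and the present $\theta$, using that $\theta$ is regular) produces a dense linear subspace $K$ of $\ell^2(\theta)$ containing no orthonormal family of size greater than $\lambda$. Since $\lambda<\theta$, in particular $K$ contains no orthonormal family of size $\theta$. It remains only to check that we may take $|K|=\theta$ (rather than larger): the subspace $K$ built in Lemma~\ref{L3} is the linear span of $\theta$ vectors, so $|K|=\theta\cdot\aleph_0=\theta$. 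Thus $Y=K$ witnesses the failure of \eqref{P3.2}.

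The main obstacle is the density-character bound on $V_\xi$ in the first direction: one must observe that the closed linear span of $\kappa$ vectors in a Hilbert space has a dense subset of size at most $\kappa^{\aleph_0}$ (finite rational combinations of the generators, or approximations thereof), hence cardinality at most $\kappa^{\aleph_0}$, and then invoke \eqref{P3.1} with $\lambda=|\xi|+\aleph_0$. Everything else is a routine Gram--Schmidt recursion and a direct appeal to Lemma~\ref{L3}. One should also note at the outset, for \eqref{P3.1} $\Rightarrow$ \eqref{P3.2}, that the case $\theta=\aleph_1$ is covered automatically since then every $\lambda<\theta$ is countable and $\lambda^{\aleph_0}=\frc$ need not be $<\aleph_1$, so \eqref{P3.1} in fact forces $\theta>\frc$ whenever $\theta$ is uncountable; but this causes no difficulty, as the recursion argument above does not use uncountability of $\theta$ at all and handles every regular $\theta$ uniformly.
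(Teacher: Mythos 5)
The direction \eqref{P3.2} $\Rightarrow$ \eqref{P3.1} is essentially the paper's: a direct appeal to Lemma~\ref{L3}. (A small slip: the linear span over $\bbC$ of $\theta$ vectors has cardinality $\theta\cdot 2^{\aleph_0}$, not $\theta\cdot\aleph_0$; this is still $\theta$ in the relevant case $\theta\geq 2^{\aleph_0}$, which is the only case in which \eqref{P3.1} can fail for uncountable $\theta$ with the witness $\lambda$ infinite.)

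The direction \eqref{P3.1} $\Rightarrow$ \eqref{P3.2} has a genuine gap at its key step. Having found $y\in Y\setminus V_\xi$, you ``apply Gram--Schmidt to $y$ and its projection onto $V_\xi$''; but the resulting vector $x_\xi=(y-P_{V_\xi}y)/\|y-P_{V_\xi}y\|$ need not lie in $Y$. For infinite $\xi$ the projection $P_{V_\xi}y=\sum_{\eta<\xi}\langle y,x_\eta\rangle x_\eta$ is an infinite sum, hence a limit of elements of $Y$ but in general not an element of the (non-closed) subspace $Y$ itself --- this is precisely the failure mode the whole note is about, and Lemma~\ref{L3} produces dense subspaces on which such a transfinite Gram--Schmidt cannot be carried out. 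Moreover, $Y\not\subseteq V_\xi$ is not the right obstruction to isolate: a linear subspace can fail to be contained in $V_\xi$ and still meet $V_\xi^{\perp}$ only in $0$ (take $V=\bbC e_1$ and $Y=\bbC(e_1+e_2)$). What you actually need is $Y\cap V_\xi^{\perp}\neq\{0\}$, and your cardinality bound does yield this after one more idea: since $|P_{V_\xi}(Y)|\leq |V_\xi|\leq (|\xi|+\aleph_0)^{\aleph_0}<\theta=|Y|$, the restriction of $P_{V_\xi}$ to $Y$ is not injective, and because $Y$ is a \emph{linear} subspace the difference of two elements with the same projection is a nonzero element of $Y\cap V_\xi^{\perp}$, which can then be normalized inside $Y$. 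With that repair your recursion goes through and constitutes an argument genuinely different from the paper's, which instead applies the generalized $\Delta$-system lemma to the supports of $\theta$ distinct elements of $Y\subseteq\ell^2(\theta)$ and subtracts a common root to produce pairwise orthogonal vectors; as written, though, the proposal's central step produces vectors outside $Y$.
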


\begin{proof} By Lemma~\ref{L3}, \eqref{P3.2} implies
\eqref{P3.1}.
 Now we assume  \eqref{P3.1} and prove  \eqref{P3.2}. We may assume
$Y$ is a subspace of $\ell^2(\theta)$. Let $y_\gamma$,
$\gamma<\theta$, be distinct vectors in $Y$. For each $\gamma$ let
$X_\gamma$ be the support of $y_\gamma$. Applying the generalized
$\Delta$-system lemma (\cite[Theorem~1.6]{Ku:Book}, with
$\kappa=\aleph_1$) to $X_\gamma$, for $\gamma<\theta$,  we find
$X\subseteq \theta$ and $I_1\subseteq \theta$ of cardinality $\theta$
such that $X_\beta\cap X_\gamma=X$ for all $\beta\neq \gamma$ in
$I_1$. Let $p$ denote the projection of $\ell^2(\theta)$ to
$\ell^2(X)$. Since $X$ is at most countable and $\theta>2^{\aleph_0}$
is regular we can find $y\in \ell^2(X)$ and $I_2\subseteq I_1$ of
cardinality $\theta$ such that $p(y_\gamma)=y$ for all $\gamma\in
I_2$. Then $z_\gamma=y_\gamma-y$ for $\gamma\in I_2$ clearly form an
orthonormal family of size $\theta$.
\end{proof}

I would like to thank Justin Moore for pointing out to a few typos. 
\providecommand{\bysame}{\leavevmode\hbox to3em{\hrulefill}\thinspace}
\providecommand{\MR}{\relax\ifhmode\unskip\space\fi MR }
\providecommand{\MRhref}[2]{%
  \href{http://www.ams.org/mathscinet-getitem?mr=#1}{#2}
}
\providecommand{\href}[2]{#2}

\end{document}